\renewcommand{\emptyset}{\varnothing}
\newcommand{\E}{\mathbf{E}}
\renewcommand{\vec}{\mathbf }
\newcommand{\f}{\frac}
\newcommand{\ind}[1]{\mathbf{1}{\{ #1 \}}}
\newcommand*\proc{{\mathpalette\bigcdot@{.7}}}
\newcommand*\bigcdot@[2]{\mathbin{\vcenter{\hbox{\scalebox{#2}{$\m@th#1\bullet$}}}}}
\newtheorem{thm}{Theorem}
\newtheorem{lemma}[thm]{Lemma}
\newtheorem{prop}[thm]{Proposition}
\newtheorem{conjecture}{Conjecture}
\theoremstyle{remark}
\newtheorem{remark}[thm]{Remark}
\theoremstyle{definition}
\newcommand{\meet}{\leftrightarrow}
\DeclareMathOperator{\BA}{BA}
\newcommand{\X}{\vec X}
\title{The upper threshold in ballistic annihilation}
\author{Debbie Burdinski}
	\address{Debbie Burdinski}
\email{\texttt{debbie.burdinski@duke.edu}}	
\author{Shrey Gupta}
\address{Shrey Gupta}
	\email{\texttt{shrey.gupta@duke.edu}}
\author{Matthew Junge}
	\address{Matthew Junge}
	\email{\texttt{jungem@math.duke.edu}}
\begin{document}
\maketitle

\begin{abstract}
Three-speed ballistic annihilation starts with infinitely many particles on the real line. Each is independently assigned either speed-$0$ with probability $p$, or speed-$\pm 1$ symmetrically with the remaining probability. All particles simultaneously begin moving at their assigned speeds and mutually annihilate upon colliding. Physicists conjecture when $p \leq p_c = 1/4$ all particles are eventually annihilated. Dygert et.\ al.\ prove $p_c \leq .3313$, while Sidoravicius and Tournier describe an approach to prove $p_c \leq .3281$. For the variant in which particles start at the integers, we improve the bound to $.2870$. A renewal property lets us equate survival of a particle to the survival of a Galton-Watson process whose offspring distribution a computer can rigorously approximate. This approach may help answer the nearly thirty-year old conjecture that $p_c >0$. 

\end{abstract}

\section{Introduction}

Two decades ago, physicists devoted considerable attention to a simple but difficult to analyze process called \emph{ballistic annihilation} \cite{ b5,  b3, b1, b4,b11, b8, b13, b14}. Particles are placed on the real line according to a unit intensity Poisson point process and each is independently assigned a speed according to a probability measure $\nu$. After this assignment, the model is deterministic; particles move at their speed and mutually annihilate upon colliding. The canonical example is with speeds from $\{-1,0,1\}$ sampled according to the symmetric measure
\begin{align}
\nu =  \f{1-p}2 \delta_{-1} + p \delta_0 + \f{1-p}2 \delta_1.\label{eq:nu}
\end{align}
%The big question for ballistic annihilation with measure $\nu$, that will be contextualized momentarily, is the longtime density of particles.

Physicists refer to this as an $A + A \to 0$ process. Ballistic annihilation was introduced in \cite{b5} with just two speeds. The goal was to study interactions of ideal gas particles. A few years later it was discussed in the context of arbitrary continuous $\nu$ \cite{b2}. The authors' motivation was to understand ``intriguing features" in the decay kinetics of \emph{irreversible aggregation}, $A_i + A_j \to A_{i+j}$, which has been used to model coalescence of fluid vortices 
\cite{fluid} and planet formation by accretion \cite{ planet}. The authors give heuristics for the decay rate of particles, and conjecture it responds continuously to perturbations in the speed measure. 

The followup work \cite{b8} predicts more interesting behavior in ballistic annihilation with discrete speeds. It is thought that the process undergoes an abrupt phase transition. Consider ballistic annihilation with the measure from \eqref{eq:nu}.  We will call speed-$0$ particles \emph{inert} and speed-$\pm1$ particles  \emph{active}.
Let $\theta_t(p)$ be the probability an inert particle survives up to time $t$, and $\theta(p) = \theta_\infty(p)$ be the probability it is never annihilated. 
Krapivsky et.\ al.\ infer in \cite{b8} that there is a critical value $p_c$ such that $\theta(p)=0$ for $p\leq p_c$, and above $p_c$  it holds that  $\theta(p) >0$. They conjecture $p_c=1/4$ and make precise 
predictions for the behavior of $\theta_t$:

\begin{align}
\theta_t(p) \sim \begin{cases} C_pt^{-1}, & p < p_c \\ Ct^{-2/3}, & p = p_c \\ 2 - p^{-1/2}, & p > p_c \end{cases}, \qquad \text{ with } \qquad  C_p = \frac{ 2p}{ (1- 4p) \pi} \text{ and } C = \frac{2^{2/3}}{ 4\Gamma(2/3)^2 }. \nonumber %\label{eq:theta}
\end{align}
\quad \\
%For $p<p^*$ they conjecture the $p_t \approx t^{-1}$ and for $p >p^*$ we have $p_t \approx C_p>0$, and so a given 0 survives forever with positive probability 
% At criticality, they conjecture 0's do not survive, and decay like $p_t \approx t^{-2/3}$. 
% Here are simulations  just above, and just below criticality:
%\begin{figure}
%	\includegraphics[width = .9 \textwidth]{ba25}
%%	\includegraphics[width = .7 \textwidth] {ba24.png}
%%	\includegraphics[width = .7 \textwidth] {ba26.png}
%	\caption{Ballistic annihilation with  $p=1/4$.
%	% (middle) $p=.24$, and (bottom) $p=.26$. We see far fewer surviving 0-speed particles for $p \leq p_c$. 
%	Figure from \cite{arrows}.} \label{fig:ba2426}
%\end{figure}

%Figure \ref{fig:ba2426} illustrates what the process looks like at this conjectured critical value.
  A simple heuristic is given in \cite{b4} for why $p_c=1/4$. Active particles move towards one another at relative speed 2, while the gap between a moving and inert particle is covered at relative speed 1. Thus, collisions between active particles ought to occur twice as often as those between inert and active particles. 
If we look at all of the collisions in a large interval, then each is one of three possibilities:
\begin{align}(0,-1), (1,0), \text{ and } (1,-1). \label{eq:heu} 
\end{align}
Doubling the $(1,-1)$-collisions to account for the prediction that these occur twice as often, we expect on average that for every eight particles removed, two are inert particles. So, when $p=1/4$ the collision types balance. 

Symmetry ensures that active particles are annihilated almost surely (see \cite[Proposition 16]{bullets}.) Note that \cite{b4} also provides a description of the decay density of $\pm1$-speed particles in the different regimes of $p$. For $p<1/4$ the right tail is predicted to have exponent $-1/2$. At criticality it is claimed to be $-1$, and for $p>p_c$ they infer that the survival time decays at an exponential rate.
Droz et.\ al.\ in \cite{b4} provide a nearly complete proof of these conjectures (including those for $\theta_t(p)$ predicted by \cite{b8}). However, some steps are not  rigorous and the argument gives little intuition. These formulas come from a complicated differential equation involving the distance between neighbor particles at time $t$. Krapivsky et.\ al.\ point out in \cite{b8} that there is still need for methods ``that would provide better intuitive insights into the intriguing qualitative features of ballistic annihilation." For example, there is no probabilistic proof that $p_c \neq 0$, let alone is equal to $1/4$. 
%Showing $p_c$ is positive is a notorious question (asked in \cite{bullets,arrows, b8})

As for upper bounds, Dygert et.\ al.\ prove  that $p_c \leq .3313$ \cite{bullets}. 
%Note that the proof in \cite{bullets} is for unit spacings, but the authors remark that the proof also works with exponentially distributed gaps. 
Additionally, \cite{arrows} considers ballistic annihilation. Sidoravicius and Tournier prove that $p_c \leq 1/3$, and outline an approach to prove $p_c \leq 0.3280$. Their proof, like the one contained in this work, is recursive in nature. The configuration of particle speeds is revealed in blocks, and the number of surviving particles can be estimated by the subconfiguration in these blocks. The main difference is that we focus on the number of surviving inert particles at a special renewal time (\thref{prop:eta}). This yields a necessary and sufficient condition for survival in terms of a Galton-Watson process (\thref{prop:iff}). Moreover, because we consider unit-spacings between particles, it is more computationally tractable to estimate the offspring distribution of the Galton-Watson process (\thref{thm:pc}). 
%Laurenzo Taggi also claims to have an unpublished argument that improves to $p_c \leq .31$. 
We are not trying to start a hunt for bounds closer and closer to the conjectured value. Rather, by improving the known bound, we reveal a new perspective into what promotes survival of speed-0 particles. Ideally this may help prove the central question in ballistic annihilation, that, for small $p$, inert particles do not survive.

%\subsection{Notation and theorem}
In this article, we consider ballistic annihilation with an inert particle at the origin, and particles placed on $\mathbb Z^+$ with i.i.d.\ speeds sampled according to $\nu$ from \eqref{eq:nu}.  To distinguish our case from ballistic annihilation with exponentially distributed spacings, we define $\psi(p)$ as the probability the speed-0 particle at the origin is never annihilated, and
$$p'_c = \inf\{ p \colon \psi(p) > 0\}.$$
Understanding survival of inert particles appears to be equally interesting and challenging whether the spacings are  deterministic, or exponential(1) distributed. This is supported by recent findings \cite{bullets2} for the closely related bullet process with finitely many particles and a non-atomic  probability measure on speeds. They find that the law for the number of surviving particles is independent of the initial spacings. 
%Another recent article considers a self-sorting 
Our main result is an improved bound for $p_c'$.

\begin{thm} \thlabel{thm:pc}
$p_c' \leq 0.2870$.
\end{thm}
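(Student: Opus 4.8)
The plan is to reduce the survival question to a computable property of a Galton--Watson process, following the renewal structure advertised in the introduction. First I would identify the special renewal time. Reveal the speeds of particles on $\mathbb{Z}^+$ one at a time, and run the dynamics among the revealed particles, together with the inert particle at the origin, as if no further particles existed to the right. At certain times the configuration ``resets'': the inert particle at the origin has been protected by a wall of surviving inert particles to its right, and the active particles generated so far have all been absorbed, so that the future evolution depends only on a fresh i.i.d.\ sequence of speeds. Call this time $\eta$ (the object of \thref{prop:eta}). The key structural claim is that $\eta$ is almost surely finite, that the number of surviving inert particles at time $\eta$ — call it the \emph{offspring count} — has an explicit distribution determined by $\nu$, and that conditionally on this count the process restarts independently from each surviving inert particle. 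This gives the branching picture: the origin particle survives forever if and only if the Galton--Watson tree with this offspring law is infinite (\thref{prop:iff}).

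Second, granting \thref{prop:iff}, the theorem becomes: for $p = 0.2870$ the offspring distribution has mean (or more robustly, survival probability) strictly greater than $1$, hence the Galton--Watson process survives with positive probability, so $\psi(0.2870) > 0$ and therefore $p_c' \le 0.2870$. Here one must be careful: a mean-$>1$ criterion is clean but the offspring distribution may be hard to pin down exactly, so I would instead exhibit a \emph{sub}-distribution that is stochastically dominated by the true offspring law and whose associated Galton--Watson process is already supercritical; by monotonicity (fewer surviving inert particles only hurts survival) this suffices. The sub-distribution should be supported on finitely many values and computed by truncating the reveal procedure at a finite depth, so that it is a finite sum of probabilities of explicitly described finite-configuration events.

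Third comes the computational core: rigorously approximating these finitely many event probabilities. Each is a sum over finitely many speed assignments to a finite block of integer sites of an indicator that the induced ballistic-annihilation dynamics leaves a prescribed pattern of inert survivors; with unit spacings the collision order is combinatorial, so each term is a rational number computable exactly (or with rigorous interval arithmetic to control rounding). Summing, one obtains a rigorous lower bound on the offspring sub-distribution, and then a rigorous lower bound on the Galton--Watson survival probability (e.g.\ via the fixed-point equation $s = f(s)$ for the extinction probability generating function $f$, checking $f'(1) > 1$ or exhibiting $s_0 < 1$ with $f(s_0) \le s_0$). Choosing the block size large enough that the bound clears $1$ at $p = 0.2870$ finishes the proof.

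The main obstacle, I expect, is \emph{establishing the renewal property} (\thref{prop:eta}) cleanly — precisely defining $\eta$ so that (i) it is a stopping time with respect to the reveal filtration, (ii) it is almost surely finite for the relevant range of $p$, and (iii) the post-$\eta$ configuration is genuinely a fresh copy of the original problem started from each surviving inert particle, with full independence. Subtleties include ruling out active particles that were emitted before $\eta$ but re-enter the relevant region afterward, and handling the possibility that the protecting wall of inert particles is itself eventually eroded from the right — one needs the definition of $\eta$ to capture a moment when the relevant ``domain of influence'' of the origin has been sealed off. Once that structural result is in hand, \thref{prop:iff} is a standard branching-process argument and \thref{thm:pc} is a finite (if large) computation.
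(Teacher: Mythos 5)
Your plan follows the paper's architecture: a renewal index $\eta$, the Galton--Watson equivalence of \thref{prop:iff}, and a finite, truncated computation that stochastically lower-bounds the offspring law. Two points, however, are genuine gaps rather than deferred details. First, you assert that $\eta$ is almost surely finite. That is false exactly in the regime you are trying to exhibit: when the seed is never annihilated, no renewal of the required type occurs, and \thref{prop:eta} is correctly stated as a dichotomy (either $\eta=\infty$ and the seed survives, or $\eta<\infty$, only inert particles survive in $\BA(\X[0,\eta])$, and the speeds beyond $\eta$ are fresh i.i.d.). The dichotomy is what you need; a.s.\ finiteness is neither available nor necessary. More substantively, you defer the entire construction of $\eta$, which is the technical heart of the argument: when a $1$-particle is destroyed by an inert particle at $j$ after starting at $i$, \thref{lem:inert} only gives independence beyond $j+(j-i)$, so one must iterate windows of dependence, track the leftmost surviving $1$-particle in each window, and handle the case in which a $1$-particle that was annihilated by an inert particle inside the current window is ``revived'' because a $-1$-particle from outside the window reaches that inert particle first. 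Naming these difficulties is not the same as resolving them.

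Second, the step ``choosing the block size large enough that the bound clears $1$ at $p=0.2870$'' cannot be taken for granted, and the paper's computation shows that the naive truncation does not get there at feasible depths: bounding $Z$ below by $\ind{X_1=0}2+\ind{X_1=1}Z_1$ and computing the first window of dependence out to $\gamma_1\le 18$ (already roughly $4\times 10^8$ configurations) yields only $p_c'\le 0.2914$. To reach $0.2870$ the paper needs an additional idea: among the enumerated configurations it isolates those whose final state is a single surviving $1$-particle at the right edge of the window together with inert particles, notes that such a state launches an independent copy of the same computation, and thereby collects a geometric number of extra expected contributions of size $m(p)-1$. Without this refinement (or a much deeper truncation, which the authors report is computationally intractable), your plan as written does not reach the stated constant. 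The overall strategy is right, but both the construction of $\eta$ and the specific bookkeeping that makes $0.2870$ attainable are missing.
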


For ballistic annihilation with unit spacings, triple collisions may slightly change the critical threshold. Let $q= (1-p)/2$, so that the probability three consecutive particles triple collide is  $pq^2$ (i.e.\ a $(1,0,-1)$ configuration). This is $\approx .07$ when $p = 1/4$. With exponential spacings, the process loses one inert and one active particle whenever this configuration  occurs. However, with the triple collision, an extra active particle is destroyed. The heuristic at \eqref{eq:heu} predicts that, after one time unit, the density of $0$-particles is
		$$w = p-2pq+pq^2.$$
The $2pq$ term accounts for the configurations $(1,0)$ and $(0,-1)$. We add $pq^2$ to prevent double counting triple collisions. Similarly, the density of active particles is 
		$$z = 2 (q -q^2 -p q).$$ 
The heuristic at \eqref{eq:heu} suggest we solve $ w = \f 1 4 z$, which yields $p \approx .2450.$ Accordingly, we conjecture that $p_c' < .2450 < p_c.$ We do not think that $p_c'$ is much smaller than $.2450$.  The next spacing at which a triple collision can occur is $(1,\; \cdot \;,\; \cdot\;, 0 , \;\cdot\;, \;\cdot\;, -1)$.  Such configurations have probability described by a degree-7 polynomial in $p$ and $q$, and so the contribution will be very small.

%\begin{figure} 
%	\includegraphics[width = 15 cm]{pc} 
%	\caption{The $y$-axis is the ending proportion of surviving inert particles and the $x$-axis is $p$. The red line is for $10^5$ particles averaged over $1000$ simulations, and the blue for $10^6$ averaged over $100$ simulations. Simulations were provided by Laurent Tournier.} \label{fig:pc'}
%\end{figure}
%

  %the critical threshold is smaller in ballistic annihilation with unit spacings. \HOX{Can we make a good prediction for how much smaller? Perhaps by estimating the proportion of triple collisions that should occur... -MJ}
%Borrowing from the heuristic at \eqref{eq:heu} Thus, we leave it as a conjecture that $p_c = p_c'$

We prove this by relating survival in ballistic annihilation to survival of a Galton-Watson process. The idea is that there is a random index $\eta$ for which (i) when restricted to just particles in $[0,\eta]$ only inert particles survive, and (ii) the speeds of particles beyond $\eta$ are independent. If $Z$  is the number of surviving inert particles in the process restricted to $[0,\eta]$, then (ii) guarantees that each of these particles will independently spawn $Z$-distributed more surviving inert particles. Because each new generation is determined independently, we obtain a Galton-Watson process that counts surviving particles. 

The construction of $\eta$ is described in the proof of \thref{prop:eta}, and the equivalence of $\psi(p) > 0$ to survival of the induced Galton-Watson process is contained in \thref{prop:iff}. This characterization
\begin{align}\psi(p) >0 \iff \E Z >1 \label{eq:iff}
\end{align}
is novel and  could be useful for showing a non-survival regime for inert particles. Note that we suppress the $p$-dependence in the expectation and write $\E Z$ in place of $\E_p Z$. 
The same reasoning we use to obtain \eqref{eq:iff} can be applied to the usual ballistic annihilation with exponential spacings. Thus, a similar equivalence holds for $\theta(p)$. In \thref{remark}, we briefly sketch how to adapt \thref{prop:eta} to this setting.

\begin{comment}
Combining this with the conjectured phase transitions for $\theta_t(p)$ suggests a dramatic discontinuity for $\E Z$.

\begin{conjecture}  $\E Z= \begin{cases} \infty, & p > p_c' \\ 1,& p =p_c' \end{cases},$ and $\E Z < 1$ when $p < p_c'$.
	%when $p > p_c'$ and $\E Z \leq 1$ for $p \leq p_c'$. Moreover, \end{conjecture}
\end{conjecture}
 
%\noindent The same abrupt phase transition occurs for the parking process on Poisson(1)-Galton-Watson trees \cite[Theorem 1.2]{tree}. This is a related annihilating system with active particles (cars) and inert particles (parking spots). We do not see any obvious connection that would explain why the expectation has a similar abrupt discontinuity.
It may be surprising how challenging it is to calculate that $\E Z>1$ when we conjecture $\E Z = \infty$. However, it seems a significant contribution to $\E Z$ comes from a $1$-particle surviving. Although it is conjectured that the survival time of a $1$-particle has exponential tail, the expected survival time is also conjecture to diverge as $p \to p_c$. As our approach requires that we enumerate all possible configurations, this quickly becomes intractable for $p$ near $p_c'$.
\end{comment}

Once we have the equivalence at \eqref{eq:iff}, we turn our attention to lower bounding $\E Z$. There are certain random times before $\eta$ at which we can stochastically lower bound $Z$. We then use a computer to estimate these probabilities. This is completely rigorous, but requires too many calculations to be done by hand.

%Our approach is only limited by computing power in the following sense.
%
%\begin{thm}\thlabel{thm:comp}
%For all $\epsilon>0$ there exists a finite calculation that proves $p_c > p^* -\epsilon.$ 
%\end{thm}

\subsection*{Notation}

%We will refer to particles with speed 0 as \emph{inert}. 
Depending on how specific we need to be, we will refer to particles with speeds $\pm1$ as either \emph{active} or as \emph{$\pm 1$-particles}. Because it will correspond to the original parent in a Galton-Watson process, we refer to the inert particle at the origin as the \emph{seed}. The randomness in ballistic annihilation is an initial vector $\vec X = (X_i)_{i=0}^\infty$ with $X_0 =0$ and the $X_i$ for $i \geq 1$ are i.i.d.\ with law $\nu$ from \eqref{eq:nu}. These are the particle speeds. 
%Let $\F$ be the sigma algebra generated by $\vec X$, and $\F_n$ the sigma algebra generated by $(X_i)_{i=0}^n$. 

 Let $a_i$ denote the particle initially at $i$.   We will let $a_i \meet a_j$ mean that particles at $i$ and $j$ mutually annihilate. With discrete speeds and spacings, it is possible that three particles collide simultaneously. Denote this with $a_i \meet a_j \meet a_k$. We emphasize that writing $a_i \meet a_j$ does not preclude a third particle also being annihilated. That is $\{a_i \meet a_j\} \cap \{a_i \meet a_j \meet a_k\} \neq \emptyset.$ 
We will sometimes use the more specific notation $a_i \mapsto a_j$ for when the active particle from $i$ annihilates the inert particle at $j$.

\section{An embedded Galton-Watson process}

To show that inert particles survive with positive probability, it suffices to establish that the seed is never annihilated with positive probability. This is because an inert particle, say $a_n$, has $n$ particles to its left, and thus has some positive probability of not being annihilated from that side. For example, all left particles are inert with probability $p^{n-1}$. Conditional on surviving from the left, the probability that $n$ is never annihilated by a particle from the right is the same as the probability the seed survives. This follows via the coupling that aligns the speeds to the right of the seed with those to the right of $a_n$ in two independent processes.

That said, in this section, we develop a framework that lets us relate survival of the seed to non-extinction of a Galton-Watson process that counts surviving inert particles. This is made explicit in \thref{prop:iff}. The construction hinges upon a renewal structure that has nice monotonicity properties. It rests upon the following two lemmas. 
\begin{lemma} \thlabel{lem:active}
Let $i<j$ and $X_j = -1$. The random variables $(X_{k})_{k >j}$ are $\nu$-distributed and independent of the event $\{a_i \meet a_j\}$.	
\end{lemma}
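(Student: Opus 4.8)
The plan is to argue that the collision event $\{a_i \meet a_j\}$ is entirely determined by the speeds $(X_k)_{k \le j}$, so that independence is automatic once we know $X_j = -1$. The key point is a ``finite look-back'' or ``locality to the left'' principle: with $X_j = -1$, the particle $a_j$ moves left, and whatever happens to it --- whether it survives, and if not, which particle annihilates it --- can only involve particles starting at positions $\le j$. Indeed, a particle at position $k > j$ starts to the right of $a_j$ and moves at speed in $\{-1,0,1\}$; since $a_j$ moves left at speed $1$, no particle originating to the right of $a_j$ can ever catch up to or cross the trajectory $a_j$ would follow, and more carefully, the region of space-time to the left of $a_j$'s trajectory is never visited by any particle (or fragment of the dynamics) originating from the right of $j$. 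Hence the restriction of the process to $\{a_k : k \le j\}$ produces exactly the same collision history for $a_j$ (and in particular the same value of $\ind{a_i \meet a_j}$) as the full process. This is the step I expect to require the most care: one must rule out the possibility that a rightward-moving particle from some $k > j$ annihilates a leftward obstacle, freeing up a particle that then reaches the left region --- but since $a_j$ itself moves strictly left at speed $1$, its trajectory forms a barrier that no later-indexed particle can cross before colliding with $a_j$ or something to its left, and this should be formalized by an induction on collisions ordered in time.

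Concretely, I would first set up the standard fact (surely available from the construction of the process, or easily proved) that ballistic annihilation has a ``spatial Markov'' or ``restriction'' property: for any threshold, the collisions occurring among particles on one side of a left-moving trajectory are measurable with respect to the speeds on that side. Then I would let $\F_j = \sigma(X_0, \dots, X_j)$ and observe that, conditionally on $X_j = -1$, the event $\{a_i \meet a_j\}$ is $\F_j$-measurable by the locality argument above. Since the $X_k$ are i.i.d.\ and $(X_k)_{k>j}$ is independent of $\F_j$, it is independent of $\{a_i \meet a_j\} \cap \{X_j = -1\}$, and because we are conditioning on $\{X_j = -1\} \in \F_j$, the conditional law of $(X_k)_{k>j}$ given $\{a_i \meet a_j, X_j = -1\}$ is still $\nu^{\otimes \infty}$. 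That gives both claims: $(X_k)_{k>j}$ remains $\nu$-distributed and is independent of $\{a_i \meet a_j\}$ (on the event $X_j = -1$).

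The main obstacle, as noted, is rigorously justifying that no particle starting to the right of $j$ can influence the fate of $a_j$ when $X_j = -1$. The clean way to see it: consider the (a priori) trajectory $t \mapsto j - t$ that $a_j$ would trace if it never collided. Any particle $a_k$ with $k > j$ starts at position $k > j$ at time $0$; for it to collide with $a_j$ it would have to reach $a_j$'s actual location, which lies weakly to the left of $j$ at all positive times, so $a_k$ would need to travel from $>j$ to $\le j$, i.e.\ move left --- but to do that it must cross the vertical line $x = j$ at some positive time, and at that moment $a_j$ has already moved to position $<j$, so $a_k$ enters the region $\{x < j\}$ only after $a_j$ has left position $j$. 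Ordering all collisions by time and inducting, one shows the first collision involving any $a_k$ with $k>j$ and any $a_m$ with $m \le j$ cannot occur before $a_j$ has itself been annihilated (it would have to be $a_j$ meeting that incoming particle, but then $a_j$ is gone). A short argument of this flavor --- essentially that a left-moving particle shields everything to its left from everything to its right --- completes the proof. I would present this as a lemma about trajectories crossing a moving boundary, then conclude measurability and hence the stated independence.
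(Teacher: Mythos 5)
Your argument is correct and is exactly the paper's approach: the paper's proof is the one-line assertion that particles to the right of $a_j$ cannot influence $\{a_i \meet a_j\}$ because $a_j$ moves left, and you simply spell out the shielding/measurability details that the authors leave implicit. No discrepancy to report.
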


\begin{proof}

The particles to the right of $a_j$ cannot influence the event $\{a_i \meet a_j\}$, thus the speeds are independent of the event.
\end{proof}
If an active particle destroys an inert particle, then this induces a short range dependence. We know that all of the active particles that could reach the inert particle before its destroyer arrives must be annihilated. However, if we look sufficiently far away the particle speeds are once again independent. 
\begin{lemma} \thlabel{lem:inert}
Let $i <j$ and $X_j =0$. The random variables $(X_k)_{k>j +(j-i)}$ are independent of the event $\{a_i \mapsto a_j\}$.  	
\end{lemma}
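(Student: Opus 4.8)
The plan is to prove that $\{a_i\mapsto a_j\}$ lies in the $\sigma$-algebra generated by $X_0,\dots,X_{j+(j-i)}$; since the $X_k$ are i.i.d., independence from $(X_k)_{k>j+(j-i)}$ is then automatic. In other words, the statement is a finite-speed-of-propagation fact: whether the active particle from $i$ destroys the inert particle at $j$ is decided by time $j-i$ using only particles that start no farther right than index $2j-i=j+(j-i)$.

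First I would record the geometry on the event $\{a_i\mapsto a_j\}$: necessarily $X_i=+1$ and $X_j=0$; the particle $a_i$ follows the straight segment from $(i,0)$ to $(j,j-i)$; the particle $a_j$ sits at position $j$ throughout $[0,j-i]$; and the annihilating collision happens at the space-time point $(j,j-i)$. The key observation is that a particle starting at an index $k>2j-i$ has speed at least $-1$, so at every time $t\in[0,j-i]$ it occupies a position $\ge k-t\ge k-(j-i)>(2j-i)-(j-i)=j$; thus it remains strictly to the right of both $a_i$ and $a_j$ for the entire interval $[0,j-i]$. Consequently no such particle can be involved in, or alter the outcome of, any collision occurring at a position $\le j$ at a time $\le j-i$, and $\{a_i\mapsto a_j\}$ is determined exactly by which collisions of that kind occur: the event says the collision at $(j,j-i)$ takes place, involves $a_i$ and $a_j$ (possibly together with a third particle, necessarily launched from index $2j-i$), and is not preceded by an earlier collision at position $j$. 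It follows that $\{a_i\mapsto a_j\}$ is a function of $X_0,\dots,X_{2j-i}$ alone.

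The step that needs the most care is the claim that particles confined to the region $\{x>j\}$ on $[0,j-i]$ genuinely cannot affect the collisions at positions $\le j$ on that time interval; this is where one invokes (or, if pressed, proves by induction on the time-ordered collisions) the standard locality of ballistic annihilation, using that the relevant space-time triangle $R=\{(x,t):0\le t\le j-i,\ x+t\le 2j-i\}$ is ``causally closed'' in the sense that its time-slices $\{x\le 2j-i-t\}$ shrink as $t$ grows, so a particle outside $R$ at one time stays outside thereafter and can only ever interact with other outside particles. The one genuine subtlety is the boundary index $k=2j-i$: a $-1$-particle launched from $2j-i$ reaches $j$ at time exactly $j-i$ and may co-annihilate in the decisive collision, so it must be allowed to influence the event. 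This is precisely why the lemma asserts independence from $(X_k)_{k>j+(j-i)}$ and not from $(X_k)_{k\ge j+(j-i)}$, and why simultaneous triple collisions must be tracked when making the induction rigorous. The argument mirrors the one-line reasoning behind \thref{lem:active}, with the stationary inert target here replacing the left-moving target there.
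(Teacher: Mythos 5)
Your argument is correct and takes the same finite-speed-of-propagation route as the paper, whose entire proof is the one-line observation that particles beyond $j+(j-i)$ cannot reach $a_j$ before $a_i$ does; you have simply made the measurability and causal-closure details explicit. One minor side point: by the remark immediately following the lemma, $X_{j+(j-i)}$ is in fact also independent of $\{a_i\mapsto a_j\}$ (a $-1$-particle from $2j-i$ arrives at $j$ exactly at time $j-i$, so it can only add a triple collision, not prevent $a_i\mapsto a_j$), so your justification for excluding the boundary index is slightly off, but this does not affect the validity of your proof of the stated claim.
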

\begin{proof}
It is elementary to work out that the particles beyond $j+(j-i)$ cannot reach $a_j$ before $a_i$ does, thus their speeds are independent of the event.
\end{proof}

\begin{remark}
The speed $X_{j+(j-i)}$ is independent of $\{a_i \mapsto a_j\}$ but possibly $a_{j + (j-i)}$ triple collides with $a_j$. 
\end{remark}

Conditioned on the event from \thref{lem:inert} we will refer to $[i,2j-i]$ as a \emph{window of dependence}. Given $\vec x = (x_0,x_1,\hdots, x_n)$, we define $\BA(\vec x)$ to be ballistic annihilation on $\mathbb R$ with particles at $0,1,\hdots,n$ where the particle at $i$ has speed $x_i$. 
Run $\BA(\vec x)$ until every collision that could occur has occurred (this takes at most $n$ time units). Let 
\begin{align}\xi_i(\vec x) = x_i\ind{a_i \text{ survives}} + 2\cdot \ind{a_i \text{ is annihilated}}\label{eq:xi}
	\end{align}
 and 
$\xi(\vec x) = ( \xi_0,\hdots, \xi_n)$
so that the $i$th entry is the speed of $a_i$ if it survives, or $2$ if $a_i$ is annihilated in $\BA(\vec x)$. 

We return to ballistic annihilation with the seed at the origin and a particle with a $\nu$-distributed speed at each nonnegative integer. The last piece of notation we need is that $\vec X[ i,j] = (X_i,\hdots, X_j)$ is the restriction to the coordinates between $i$ and $j$. We now explain the renewal structure. The following proposition asserts that there exists a random index $\eta$ such that only inert particles survive in $\BA(\X[0,\eta])$, and the particle speeds beyond $\eta$ are independent.

\begin{prop} \thlabel{prop:eta}
There exists a random variable $\eta\geq 1$ such that either 
\begin{enumerate}[label = (\roman*)]
	\item  $\eta = \infty$ and the seed survives, or 
	\item $\eta < \infty$ and $\xi(\vec X[0,\eta]) \subseteq \{0,2\}^\eta$ and $(X_i)_{i > \eta}$ are $\nu$-distributed and independent of $\eta$ and each other. Additionally, if $\eta >1$, then the seed survives in $\BA(\vec X[0,\eta])$. 	
\end{enumerate}
\end{prop}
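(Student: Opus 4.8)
The plan is to build $\eta$ by revealing the speeds $X_1, X_2, \dots$ one index at a time, running ballistic annihilation on the revealed prefix, and stopping the first time the prefix becomes a self-contained block. Call an index $\ell \geq 1$ \emph{closed} if in $\BA(\X[0,\ell])$ every surviving particle is inert and, in addition, every collision occurring in $\BA(\X[0,\ell])$ has its window of dependence contained in $[0,\ell]$ --- concretely, every collision $a_i \mapsto a_j$ that destroys an inert particle satisfies $2j - i \leq \ell$, the windows of all other collisions lying in $[0,\ell]$ automatically (\thref{lem:active}). Set $\eta = \inf\{\ell \geq 1 : \ell \text{ is closed}\}$, with $\inf\emptyset = \infty$. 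Informally: if $X_1 \in \{0,-1\}$ then $\eta = 1$; if $X_1 = +1$ we track the rightmost surviving right-moving particle, and whenever it is annihilated by a fresh inert particle we reveal the resulting window of dependence (\thref{lem:inert}); if another surviving right-mover then lies inside that window we continue tracking, and otherwise (or if the tracked particle is instead annihilated head-on by a left-mover) the revealed prefix is a closed block.

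Two of the required properties are immediate. The event $\{\ell \text{ is closed}\}$ is a deterministic function of $(X_1,\dots,X_\ell)$, since $\BA(\X[0,\ell])$, its collisions, and their windows are; hence $\{\eta = \ell\} \in \sigma(X_1,\dots,X_\ell)$, so $\eta$ is a stopping time for the natural filtration of the i.i.d.\ sequence $(X_i)_{i\ge1}$. By the renewal property of an i.i.d.\ sequence at a stopping time, conditionally on $\{\eta = \ell < \infty\}$ the speeds $(X_i)_{i>\ell}$ are i.i.d.\ $\nu$ and independent of $\sigma(X_1,\dots,X_\ell) \supseteq \sigma(\eta)$, which is the independence assertion in (ii). And when $\eta < \infty$, the definition of ``closed'' forces each coordinate of $\xi(\X[0,\eta])$ to be $2$, if that particle is annihilated, or else the speed of a survivor, which is $0$.

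The substantive point is the last clause. When $\eta > 1$, index $1$ is not closed, which forces $X_1 = +1$ (both $X_1 = 0$ and $X_1 = -1$ make index $1$ closed). I would then run a \emph{shielding} argument: whenever a right-moving particle lies to the right of the origin, the seed is immune, because a left-mover heading toward the origin must collide with that right-mover --- at a strictly positive position --- strictly before it could reach the origin. One checks that the tracking procedure keeps such a shield in place at every stage until the block closes (a replacement is present before the old shield disappears, and the collisions that consume the successive shields, as well as those absorbing all remaining left-movers, occur at strictly positive positions). Hence in $\BA(\X[0,\eta])$ no left-mover reaches the origin before time $\eta$, so the seed survives there; and if $\eta = \infty$ the tracking never terminates, the seed is shielded for all time, and it survives in the original process --- this is case (i).

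Making the shielding argument rigorous is the main obstacle, the difficulty being that several collisions can have overlapping windows of dependence, so the block need not close at the window endpoint of the first collision involving $a_1$, and a fresh right-mover may be revealed deep inside a window. I would argue directly on the particles' straight-line trajectories. Suppose for contradiction that the seed is annihilated in $\BA(\X[0,\eta])$; its destroyer is a left-mover $a_k$ reaching the origin at time $k$, with $1\le k\le\eta$. This event depends only on $X_0,\dots,X_k$ (nothing starting right of $a_k$ reaches the origin before time $k$), so it already occurs in $\BA(\X[0,k])$. In $\BA(\X[0,k])$ every $a_i$ with $1\le i\le k-1$ is then annihilated, because a survivor's straight line would either cross $a_k$'s trajectory --- impossible, since $a_k$ lasts until time $k$ --- or reach the origin before time $k$ --- impossible, since $a_k$ is the destroyer. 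Moreover every inert-destroying collision $a_i \mapsto a_j$ in $\BA(\X[0,k])$ satisfies $2j-i\le k$: otherwise, since $a_k$ survives to time $k$ it reaches position $j$ unobstructed at time $k-j$, which precedes the collision time $j-i$ (as $2j-i>k$), so $a_k$ strikes the still-present inert particle $a_j$ and perishes --- a contradiction. Thus $k$ is closed, whence $k=\eta>1$ and $X_1=+1$; but then $a_1$ moves right and cannot be the destroyer, so $k\ge2$, and following the tracking procedure launched by $a_1$ one collision at a time --- keeping account of the nested windows and of the shield --- produces a closed index strictly below $k$ with the seed still alive, contradicting the minimality of $\eta$. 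Executing this last step cleanly is the crux, and is most naturally carried out by induction along the tracking procedure.
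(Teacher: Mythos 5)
Your construction is genuinely different from the paper's. Instead of the paper's procedural definition of $\eta$ (tracking successive surviving $1$-particles, their windows of dependence, and a delicate side case for $1$-particles whose collisions with inert particles might be undone by later arrivals), you define $\eta$ as the first \emph{closed} index, and then measurability, the stopping-time independence of $(X_i)_{i>\eta}$, and $\xi(\X[0,\eta])\subseteq\{0,2\}^{\eta}$ come essentially for free. This is arguably cleaner than the paper's argument, and your contradiction step --- the destroyer $a_k$ of the seed forces every $a_i$ with $i<k$ to be annihilated and every inert-destroying collision to have window inside $[0,k]$, so $k$ is closed --- is correct and also disposes of case (i), since it shows that annihilation of the seed anywhere forces $\eta\le k<\infty$.

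The gap is the one you flag yourself: the last clause of (ii). You correctly reduce to the situation where the seed's destroyer is $a_\eta$ itself ($k=\eta>1$, $X_\eta=-1$), but then appeal to an unexecuted ``induction along the tracking procedure'' to manufacture a closed index below $\eta$; as written this is an assertion, not a proof, and it is the only substantive part of the proposition. The contradiction is in fact available with no tracking at all. Since $a_\eta$ reaches the origin at time $\eta$, it participates in no collision before that moment (any earlier collision annihilates it at a strictly positive position), so deleting $a_\eta$ changes nothing else: $\BA(\X[0,\eta-1])$ has exactly the collisions of $\BA(\X[0,\eta])$ minus the final one at the origin, and its unique survivor is the inert seed. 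Moreover every inert-destroying collision $a_i\mapsto a_j$ there satisfies $2j-i\le \eta-1$: closedness of $\eta$ gives $2j-i\le\eta$, and equality would place $a_\eta$ at position $j>0$ exactly at the collision time $j-i$, killing it in a triple collision before it reaches the origin. Hence $\eta-1\ge1$ is closed, contradicting minimality of $\eta$. With that substitution your argument closes; without it, the crux of the proposition remains unproved.
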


\begin{proof}

We remark that the last condition of (ii) for $\eta>1$ is included to exclude the trivial renewal time at the index of the $-1$-particle that destroys the seed. So, unless $\eta=1$, the particle at $\eta$ will not destroy the seed. 

We define $\eta$ in terms of $X_1$ and a random variable $\eta_1$ for the distance we must look out for the process to be completely independent of how $a_1$ is annihilated when $X_1 =1$:
\begin{align}
\eta := \ind{X_1\neq 1} + \ind{X_1 = 1} \eta_1. \label{eq:eta}
\end{align}
Now we describe $\eta_1$. Consider ballistic annihilation with $X_0 =0$ and $X_1 =1$, and let $\gamma_1$ be the index of the particle $a_{1} \meet a_{\gamma_1}$. First off, if $\gamma_1 = \infty$ then set $\eta_1=\infty$ and the seed survives. Supposing $\gamma_1 < \infty$, if there is a triple collision, we take the larger of the two indices (so necessarily $X_{\gamma_1} = -1$ in this case). If $X_{\gamma_1} = -1$, then we set $\eta_1 = \gamma_1$. In this case, we have $\xi(\X[0,\eta_1]) = \{0\} \times \{2\}^{\eta_1}$, and the particle speeds beyond $\eta_1$ are independent because of the renewal in \thref{lem:active}.

It gets more complicated when $X_{\gamma_1} = 0$. \thref{lem:inert} with $i=1$ and $j= \gamma_1$ tells us that there is an $I_1:=[1,2 \gamma_1 - 1]$ window of dependence on this event. Let $\X_1 = \X[0,2 \gamma_1 -1]$ and $$\tau_1 = \max \{ i \in I_1 \colon \xi_i(\X_1) = 0\}$$ be the starting location of the rightmost surviving inert particle. Note that the seed must survive, so we know that $\tau_1$ is well defined and nonnegative. Also, let $$\kappa_1 = \min \{ \tau_1 \leq i \leq 2\gamma_1 - 1\colon \xi_i (\mathbf X_1) = 1\}$$ be the first surviving $1$-particle to the right of $\tau_1$. If there is no such particle, set $\kappa_1 = 0.$ We will consider this case separately in a moment.

If $\kappa_1 >0$, then we let $\gamma_2$ be such that $a_{\kappa_1} \meet a_{\gamma_2}$. In words, $\gamma_2$ is the index of the particle that destroys the active particle $a_{\kappa_1}$.  As before, if there is a triple collision, then we take the particle with larger index. If $X_{\gamma_2} = -1$, then the process renews and we set $\eta_1 = \gamma_2$. If not, then a new window of dependence is induced by the event $\{a_{\kappa_1} \mapsto a_{\gamma_2}\}$. We once again look at the furthest left surviving $1$-particle and either set $\eta_1$ equal to the index of the $-1$-particle that destroys it, or iterate another step if the particle hits an inert particle, causing a new window of dependence. This will either: halt eventually, giving some value of $\eta_1$; never halt, in which case $\eta_1 =\infty$; or give some $\kappa_i = 0$, which puts us in the case described now for $\kappa_1$. 

If $\kappa_1 = 0$, then there are no surviving active particles in $\BA(\X_1)$. However, it is possible that some $1$-particles that annihilate inert particles in $\BA(\X_1)$ will survive longer when  particles from beyond $\X_1$ are introduced. For example, the survival of a $1$-particle annihilated by an inert particle at $x$ is prolonged if a $-1$-particle started outside of $\X_1$ reaches $x$ first. To account for this, we look at the collection of $1$-particles from $\BA(\X_1)$ that are destroyed by inert particles
$$J = \{ j \colon X_j \in \X_1, X_j =1, a_j \mapsto a_{j'} \text{ with } X_{j'} = 0\}.$$
If $J$ is empty, then we set $\eta_1 = 2\gamma_1-1$. Otherwise, for each $j \in J$ we augment $\X_1$ to a larger interval $\X_{1,j}= \X[w_j,z_j]$ defined to be the  interval such that a $-1$-particle at $z_j$ could triple collide (if it had a clear path) with $a_j$ and $a_{j'}$. Set $z = \max_{j \in J} z_j$ to be the furthest right index that we must consider in order to know whether each collision $a_{j} \mapsto a_{j'}$ occurs in the $\BA(\X)$. 

If all of the collisions counted by $J$ occur, then we possibly gain some additional particles in $\BA(\X[0,z])$. However, since all of the collisions in $J$ still occurred we once again cannot have any surviving $-1$-particles in $\BA(\X[0,z])$. So, we repeat the procedure from some $\kappa_2$ equal to the leftmost surviving $1$-particle in $\BA(\X[0,z])$. This will either eventually terminate and we will obtain $\eta$, or never halt and set $\eta = \infty$ with the seed never being reached by a $-1$-particle. 

The last case to consider is that some collision counted by $J$ does not occur because a particle from $[2 \gamma_2 - 1,z]$ reaches an inert particle first. Letting $\kappa_2$ be the leftmost such $1$-particle, we then know that $a_{\kappa_2}$ will now survive longer when we introduce more particles to the right. We set $\gamma_2$ to be the index such that $a_{\kappa_2} \meet a_{\gamma_2}$. This reinitiates the procedure we have defined, which will either terminate at a renewal time $\eta$, or never terminate so that $\eta = \infty$ and the seed survives.
%Triple collisions create an issue here because an inert particle in $I_1$ that is destroyed in $\BA(\X_1)$ might be involved in a triple collision with a $-1$-particle outside of $I_1$. To account for this we look at all collisions of $1$- and inert particles in $\BA(\X_1)$ and set $I'_2= [w,z]$ to be the largest window of dependence induced by these. If $I'_2  \subseteq I_1$ then there will be no interference of the independent particles outside of $I_1$ with those inside $I_1$. In this case we set $\eta_1 = 2 \gamma_1 -2$. If $I'_2 \subsetneq I_1$ then we consider $\BA( \X[0,z])$. The same reasoning as before about windows of dependence ensures that we only obtain more surviving inert particles  in $\BA(\X[0,z])$ then in $\BA(\X_1)$. After running it, we will be in one of the three cases just discussed. So, either we will obtain some $\eta_1$, or the process continues indefinitely and $\eta_1 = \infty.$ 
%
\end{proof}

\begin{remark} \thlabel{remark}
A similar statement also holds when particles are placed according to a unit intensity Poisson process. \thref{lem:active} and \thref{lem:inert} still hold in this setting, and thus we can follow the same steps to obtain a renewal as in \thref{prop:eta}. 
\end{remark}

%\begin{proof}
% Let $\eta_0 =1$ and $\xi^{(0)} = \xi(\X[0,\eta_0])$. Set $\kappa_0 = \min \{i \colon \xi_i =1\}$. If $\kappa_0 =-2$ then we set $\eta = \eta_0$.  Otherwise, we let $\gamma_0$ be such that $a_{\kappa_0} \meet a_{\gamma_0}.$ We then define $\eta_1> \eta_0$ as
%%
% \begin{align} \eta_1 = \begin{cases}  {\gamma_0}, & X_{\gamma_0} =  -1 \\ {\gamma_0} + (\gamma_0 -\kappa_0) -1, &  X_{\gamma_0} = 0\end{cases}. \label{eq:eta1}
%\end{align}
% Let $\xi^{(1)} = \xi( \X[0,\eta_1]) = (\xi^{(1)}_0, \hdots \xi^{(1)}_{\eta_1})$ and $\kappa_1 = \min \{ i \colon \xi_1^{(1)} = 1\}$. If $\kappa_1 = -2$ then set $\eta = \eta_1$. Otherwise, let $\gamma_1$ be such that $a_{\kappa_1} \meet a_{\gamma_1}$. We then define 
% \begin{align*} \eta_2 = \begin{cases}  {\gamma_1}, & X_{\gamma_1} =  -1 \\ {\gamma_1} + (\gamma_1- \kappa_1) -1, &  X_{\gamma_1} = 0\end{cases}.
%\end{align*}
%We then perform the analogous checks to the previous case. We let $\xi^{(2)} = \xi( \X[0,\eta_2])$ and $\kappa_2 = \min \{ i \colon \xi^{(2)}_i = 1\}$. If there are no surviving $1$-particles then set $\eta = \eta_2$, otherwise we continue in this manner. If this iterative process never terminates ($\eta = \infty$) then there is always a surviving $1$-particle to the right of the seed. This implies that the seed is never annihilated. If the process terminates and $\eta = \eta_j$ for some $j < \infty$, then the leftmost surviving 1-particle from $\BA(\X[0, \eta_{j-1}]$ is annihilated by a $-1$-particle. By \thref{lem:active} the speeds $(X_i)_{i > \eta}$ are independent. 
% \end{proof}

Define the random variable $Z = |\{ i \colon \xi_i(\X[0,\eta]) = 0\}|$ to be the number of surviving inert particles. Because the process renews after $\eta$ we can link the expected number of surviving inert particles in $\BA(\X[0,\eta])$ to $\psi(p)$ via a Galton-Watson process.

\begin{prop} \thlabel{prop:iff}
 Let $\eta$ be as in \thref{prop:eta} and $Z \geq 0 $ be the number of surviving inert particles from $\BA(\X[0,\eta])$. It holds that $$\psi(p) >0 \iff \E Z >1.$$
\end{prop}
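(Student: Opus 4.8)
The plan is to set up a Galton--Watson process whose root is the seed and whose offspring distribution is exactly the law of $Z$, then invoke the classical dichotomy that such a process survives with positive probability if and only if its mean offspring exceeds $1$ (ruling out the degenerate case $\P(Z=1)=1$ separately). The construction of the tree is where \thref{prop:eta} does all the work: by part (ii), on the event $\eta<\infty$ only inert particles survive in $\BA(\X[0,\eta])$, and the speeds $(X_i)_{i>\eta}$ are i.i.d.\ $\nu$ and independent of $\eta$. So I would argue that each of the $Z$ surviving inert particles in $[0,\eta]$ sits at some index $m_1<\dots<m_Z$, and that the process restricted to $[m_k,\infty)$ — after the particles in $[0,\eta]$ have finished colliding — looks, from the point of view of $a_{m_k}$, exactly like a fresh copy of the original problem: the surviving inert particle $a_{m_k}$ has only inert particles and already-annihilated debris to its left (so nothing from the left will ever reach it), and the speeds to its right, namely $(X_i)_{i>\eta}$ together with any surviving particles strictly between $m_k$ and $\eta$ — here I need to be a little careful, but since all survivors in $[0,\eta]$ are inert and inert particles never move, each survivor $a_{m_k}$ is shielded on its right only by other stationary inert particles until the independent fresh randomness beyond $\eta$ takes over, and by the coupling described in the opening of Section~2, $a_{m_k}$ survives in the full process with exactly probability $\psi(p)$, independently across $k$. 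Iterating, the set of surviving inert particles descending from the seed forms a Galton--Watson tree with offspring law $Z$.

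With the tree in hand, the equivalence is almost immediate. First, $\psi(p)>0$: the seed survives in the full process if and only if it survives every stage, and on the event $\eta=\infty$ the seed survives outright (part (i)); on the event $\eta<\infty$ the seed is one of the $Z$ survivors in $[0,\eta]$ (it is a nonnegative count and, when $\eta>1$, the last clause of (ii) guarantees the seed survives $\BA(\X[0,\eta])$; when $\eta=1$ the seed is destroyed by $a_1$ and $Z=0$), so seed-survival in the full process is equivalent to non-extinction of the embedded Galton--Watson tree. Hence $\psi(p)>0$ iff the Galton--Watson process with offspring law $Z$ survives with positive probability. By the standard extinction theorem this holds iff $\E Z>1$, \emph{provided} we exclude the exceptional case $\P(Z=1)=1$. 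So the remaining point is to rule that out: I would note that with positive probability $X_1=-1$, which forces $\eta=1$ (via \eqref{eq:eta}, since $\ind{X_1\ne 1}=1$ and the seed is destroyed) and $Z=0$; therefore $\P(Z=0)>0$, so $Z$ is not a.s.\ equal to $1$, and the clean dichotomy ``survival $\iff \E Z>1$'' applies.

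The step I expect to be the real obstacle — the one deserving the most careful writing — is justifying that the $Z$ survivors spawn \emph{independent} $\psi(p)$-survival events, i.e.\ that the embedded process is genuinely a Galton--Watson tree and not merely a branching process with correlated offspring across generations. The subtlety is twofold: (a) one must check that the regions ``seen'' by distinct survivors $a_{m_k}$, together with the fresh randomness beyond $\eta$, can be decoupled — here the key is that all survivors in $[0,\eta]$ are inert, hence motionless, so the future evolution to the right of $a_{m_k}$ depends only on $(X_i)_{i>\eta}$ and on the (deterministic, already-known) configuration of stationary survivors in $(m_k,\eta]$, and the coupling from the start of Section~2 upgrades ``same right-configuration'' to ``same survival probability $\psi(p)$''; and (b) one must verify measurability/independence of $\eta$ and $Z$ from $(X_i)_{i>\eta}$, which is exactly what part (ii) of \thref{prop:eta} supplies. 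Once these are in place the branching-process machinery is routine, so the write-up should spend its effort on carefully stating the coupling that realizes the recursion and on the $\P(Z=0)>0$ observation that licenses the mean-one criterion.
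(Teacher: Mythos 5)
Your architecture is the same as the paper's: embed a Galton--Watson tree with offspring law $Z$ rooted at the seed, identify seed-survival with non-extinction, and apply the mean criterion; your explicit exclusion of the degenerate case $\P(Z=1)=1$ via $\P(Z=0)\geq \P(X_1=-1)>0$ is a detail the paper glosses over and is worth keeping. However, one claim in your write-up is false as stated: the events $\{a_{m_k}\text{ survives}\}$ are \emph{not} independent across $k$. Since all survivors of $\BA(\X[0,\eta])$ are inert and the intervening particles are already annihilated, a $-1$-particle can only reach $a_{m_k}$ after every survivor to its right has been destroyed; hence destruction of $a_{m_k}$ forces destruction of $a_{m_j}$ for all $j>k$, so the survival events are nested (maximally positively correlated), not independent. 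What is genuinely i.i.d.\ is the family of \emph{offspring counts}, and the paper obtains this by processing survivors depth-first from the right: the rightmost survivor faces the fresh block $(X_i)_{i>\eta}$ and produces a $Z$-distributed brood in its own renewal window; if it is destroyed, the construction of \thref{prop:eta} (via \thref{lem:active}) renews the randomness, and the next survivor to the left then faces a fresh independent block, and so on. Replacing ``survives with probability $\psi(p)$ independently across $k$'' by this sequential renewal argument makes your proof coincide with the paper's.
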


\begin{proof}
 The random variable $Z$ can be used as the offspring distribution for a Galton-Watson process that counts surviving inert particles. Starting with the seed, \thref{prop:eta} ensures that we have $Z$ inert particles in $[0,\eta]$ that survive from $\BA(\X[0,\eta])$. Moreover, the speeds $(X_i)_{i > \eta}$ are i.i.d.\ $\nu$-distributed. 
 
 We claim that each of these $Z$ particles will (eventually) serve as a seed that spawns $Z$-distributed more surviving inert particles. We do this in a ``depth first" manner. Consider the rightmost surviving inert particle in $[0,\eta]$. Say it is at $i$. By construction there are no surviving active particles in $[i,\eta]$. If $i= \eta$, then this is exactly the same initial configuration as with the seed. Even if $i<\eta$, we still obtain a $Z$-distributed number of surviving inert particles before the process renews. This is because a $-1$-particle must destroy the inert particle at $i$. This happens irregardless of whether or not there is a gap $(i < \eta)$ or not ($i =\eta)$. We continue generating $Z$ offspring at each surviving inert particle and stop if the process goes extinct. 

The previous discussion implies that if the $Z$-distributed Galton-Watson process survives then the seed is never annihilated. Conversely, if $\psi(p) >0$, then 
the seed survives with positive probability. This can only happen if the Galton-Watson process started from the seed does not going extinct. The result follows from the elementary fact that non-extinction of a Galton-Watson process with positive probability is equivalent to $\E Z >1$.
\end{proof}

\section{Approximating $\E Z$}
The complicated definition of $\eta$ from \thref{prop:eta} suggests it would be difficult to calculate $\E Z$ explicitly. Even calculating the distribution of $\eta$ seems beyond our reach. However, there are certain times before $\eta$ at which we can obtain lower bounds on $\E Z$.  It helps to explain our approach in stages. First, we consider the effect of $a_1$.

\subsection{The effect of $\BA(\X[0,1])$}  The simplest lower bound on $Z$ is to look at what happens in $\BA(\X[0,1])$. If $X_1 = -1$, then $Z= 0$, and if $X_1 = 0$, then $Z=2$. This gives 
\begin{align}
Z \succeq \ind{X_1 = 0}2.	\label{eq:Z00}
\end{align}
Thus, if $p > 1/2$, we have $\E Z >1$. Equivalently, $p_c' \leq 1/2$. This is a start, but not so interesting. The same statement could be proven by accounting for the number of inert particles versus active particles with a $p$-biased random walk. Survival of the seed is equivalent to the walk never returning to 0, which happens with positive probability when $p >1/2$. 

Let us go one step further by considering the case $\eta>1$. If $X_1 = 1$, then the seed will not be annihilated by any particles in $[0,\eta]$ by construction of $\eta$. Thus, $Z \succeq 1$ on this event.   This gives the more meaningful bound
\begin{align}Z \succeq  \ind{X_1 = 0}2 + \ind{X_1 = 1}.\label{eq:Z0}\end{align}
Taking expectation in \eqref{eq:Z0} yields $\E Z \geq 2p + \f{1-p}2.$
When $p > 1/3$, this is larger than $1$. Thus, we arrive easily at the bound $p_c' \leq 1/3$, which is proven in \cite{bullets} and proven for $p_c$ in \cite{arrows}.

\subsection{The effect of $a_{\gamma_1}$} \label{sec:step2}

We can do better by extracting some benefit from the $1$-particle at $1$. Let $\gamma_1$ be the index of the particle that destroys $a_1$ as in the proof of \thref{prop:eta}. Let $Z_1$ be the number of surviving inert particles in the window of dependence induced by the event $\{a_1 \meet a_{\gamma_1}\}$. Depending on whether $X_{\gamma_1}= -1$ or $0$, this window is either $[0,\gamma_1]$ or $[0,2\gamma_1 -1]$. By construction, these inert particles will not be destroyed in $\BA(\X[0,\eta])$,  and thus $\ind{X_1=1} Z_1 \preceq \ind{X_1 = 1}Z$. This lets us improve \eqref{eq:Z0} to the following dominance relation
\begin{align}
Z \succeq \ind{X_1 = 0} 2 + \ind{ X_1 = 1} Z_1.	\label{eq:better}
\end{align}
 Figure \ref{fig:Z1} depicts a realization in which $Z_1 = 2$. 
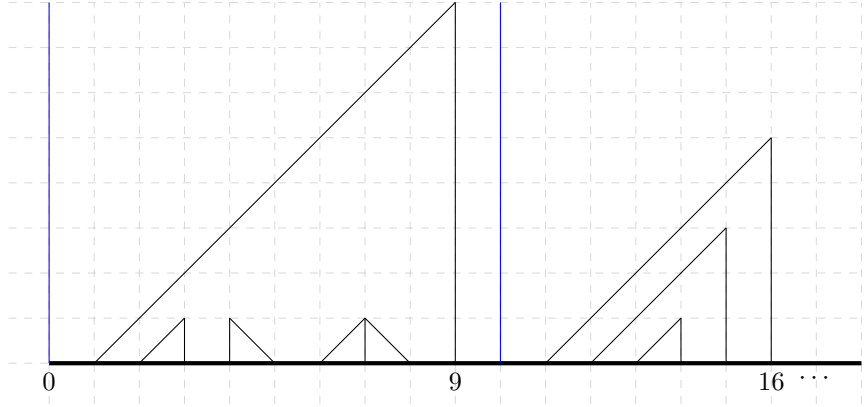
\begin{figure}
\begin{center}

\begin{tikzpicture}[scale = .6]

	\draw[help lines, color=gray!30, dashed] (-0.9,-.9) grid (18,8);
	\draw[ultra thick] (0,0)--(18,0);
	\draw[blue] (0,0) -- (0,8);
	\node[below] at (0,0) {$0$};
	\draw (1,0) -- (9,8);
%	\node[below] at (1,0) {$1$};
	\draw (2,0) -- (3,1);
%	\node[below] at (2,0) {$2$};
	\draw (3,0) -- (3,1);
%	\node[below] at (3,0) {$3$};
	\draw (4,0) -- (4,1);
	\draw (5,0) -- (4,1);
	\draw (6,0) -- (7,1);
	\draw (7,0) -- (7,1);
	\draw (8,0) -- (7,1);
	\draw (9,0) -- (9,8);
	\draw[blue] (10,0) -- (10,8);
	\draw (11,0) -- (16,5);
	\draw (12,0) -- (15,3);
	\draw (13,0) -- (14,1);
	\draw (14,0) -- (14,1);
	\draw (15,0) -- (15,3);
	\draw (16,0) -- (16,5);
%	\node[below] at (4,0) {$4$};
%	\node[below] at (6.5 ,0) {$\cdots$};
%	\draw (9,0) -- (6.5,2.5);
	\node[below] at (9,0) {$9$};
	\node[below] at (16,0) {$16$};
	\node[below] at (17 ,0) {$\cdots$}; 
%	\node[above = .25 cm] at (17,0) {i.i.d.};

\end{tikzpicture} 
\caption{In this example, $\gamma_1 =9$. We are left with $Z_1 = 2$ surviving inert particles in $\BA(\X[0,2\gamma_1 - 1])$.} \label{fig:Z1}
\end{center}
\end{figure}
To better understand $Z_1$, we decompose it relative to the behavior of $\gamma_1$:
\begin{align}Z_1 &= \sum_{n=2}^\infty  \ind{\gamma_1 = n} Z_1 \nonumber\\
&= \sum_{n=2}^\infty  \ind{X_{\gamma_1} = -1 , \gamma_1 = n} + \ind{ X_{\gamma_1} = 0,\gamma_1 = n} Z_1 \nonumber \\
&= 1 + \sum_{n=2}^\infty \ind{X_{\gamma_1} = 0, \gamma_1 = n} (Z_1-1).
\label{eq:EZ1}
	\end{align}
	Note that $\gamma_1 \geq 2$ since we only introduce $\gamma_1$ when $X_1 = 1$, and so the particle that destroys $a_1$ must start at $2$ or beyond. All of the cases where $X_{\gamma_1} = -1$ are included in the leading $1$ summand, so we devote our attention to when $X_{\gamma_1} = 0$.
%	Also, observe that $Z_1 \geq 1$ since the seed always survives in $\BA([0,\eta])$ when $X_1 =1$. Thus, the term $Z_1 -1$ is nonnegative.
%	\HOX{Need to say what $A_n$ is a subset of. -MJ}
	Let 
		\begin{align}
	A_n = \{X_{\gamma_1} = 0,\gamma_1 = n\} \subseteq \{0\} \times \{1\} \times \{-1,0,1\}^{2 n - 2}\label{eq:An}
		\end{align}
be the set of sub-configurations for which $\gamma_1 =n$ and $X_{\gamma_1} = 0$. Notice we only need to know the entries up to $2 n -1$ to determine if $a_{1} \mapsto a_{n}$.

In light of \eqref{eq:EZ1}, we would like to understand
	$\E[\ind{A_n}(Z_1-1)]$. An important observation is that $Z_1 -1 \geq 0$ since the seed always survives in $\BA([0, 2 \gamma_1 - 1])$. This means we can lower bound the expectation by computing its value for finitely many $n$. We do so by counting surviving inert particles in the final state of $\BA(\X[0,2 \gamma_1 -1])$ in each of the $ 3^{2n-2}$ possible realizations of particle speeds from $\{0\} \times \{1\}\times  \{ -1, 0 ,1\}^{2n-2}$. 
%Let $G_n = \{\vec x \in \Omega_{2n-2} \colon \gamma_0 = n\} $ be the set of all configurations that result in $\{\gamma_0 = n\}$. 

Given $\vec x \in A_n$, let $I(\vec x)$ be the number of $0$-entries in $\vec x$. Simply by the definition of $\nu$ at \eqref{eq:nu}, the probability $\vec x$ occurs is
$$q_n(\vec x) := p^{I(\vec x)} \left(\f{1-p}{2} \right)^{2n-2 - I(\vec x)}.$$
Letting $Z_1(\vec x)$ be the number of surviving inert particles in $\BA(\vec x)$, we then have
\begin{align}\E[ \ind{A_n} (Z_1 -1)]= \sum_{\vec x \in A_n} q_n(\vec x) [ Z_1(\vec x) -1] .\label{eq:Pg}	
\end{align}

This is possible for a computer to calculate for small values of $n$. For instance, if we compute for $n\leq 18$ (approximately 400 million cases) and take expectation in \eqref{eq:EZ1}, we obtain the bound
%\HOX{Actually find $m$ here. -MJ}
\begin{align}
\E Z_1 \geq 1 + \sum_{n=2}^{18} \sum_{x \in A_n} q_n(\vec x) [ Z_1(\vec x) -1] = m(p),\label{eq:m(p)}
\end{align}
and thus by \eqref{eq:better} we have
\begin{align}\E Z \geq 2p + \f{1-p}2 m(p).\label{eq:EZ1'}
\end{align}
By numerically checking the boundary values of $p$, we find that $\E Z>1$ when $p > 0.2914$. Note that $m(.2914) \approx 1.1178$. So the ``gain" we had from the previous calculation is $\approx .1178$ more expected surviving inert particles.

\subsection{Using a surviving $1$-particles from $\BA(\X[0,2 \gamma_1 - 1])$}
A simple way to optimize further is by re-using the previous calculation for configurations from $A_n$ in which there is a single surviving  $1$-particle at $2n-1$, and otherwise only inert particles. Formally, let 
$$A_n' = \{ \vec x \in A_n \colon \xi_{2n-1}(\vec x) = 1, \xi_i(\vec x) \in \{0,2\} \text{ for }0 \leq i \leq 2n-2\}$$
 be the set of all such configurations. We claim that each $\vec x' \in A_n'$ provides an independent $Z_1-1$ distributed number of inert particles. This is because the particle speeds to the right of $a_{2\gamma_1-1}$ are independent. Thus, we can couple the number of surviving inert particles induced by $a_{2 \gamma_1 -1}$ to $Z_1-1$. We subtract 1 so we do not double count the seed. Let $b(p) = \sum_{n=1}^{18}\sum_{\vec x ' \in A'_n} q_n(\vec x ')$ be the probability of a configuration from $A_n'$. Each time this occurs we obtain an expected $m(p) -1$ more inert particles with $m(p)$ from \eqref{eq:m(p)}. This will happen geometric($b(p)$)-distributed many times, which has expectation $b(p) / (1- b(p))$. It follows that we can improve our bound from \eqref{eq:EZ1'} to the following  
 $$\E Z \geq 2p + \f{1-p}{2}\left( m(p) + \f{b(p)}{1- b(p)}(m(p)-1)\right).$$
It takes a computer about three hours to obtain an algebraic expression for $b(p)$. After doing so, we find that $\E Z >1$ when $p > .2870.$ In this case we have $m(p) \approx 1.1713$ and $b(p) \approx .1226.$

\subsection{Further benefit from surviving active particles} The bound we obtain on $p_c'$ is as far as seemed reasonable to push our technique. More complicated calculations can be done where one considers the impact of other configurations of surviving $1$-particles to the right of $\gamma_1$. For example, one might consider the case that there is a single surviving $1$-particle at $2 \gamma_1 -k$ with $k \geq 1$ (we only consider $k=1$). However, we found that the improvements to our bound were very small (about a $.004$ to our bound on $p_c$) did not justify the added complexity to the argument. 
%As discussed in the introduction, since it is conjectured that the expected lifetime of a $1$-particle diverges as $p \to p_c$, i
It is our belief that the main benefit to estimating $\E Z$ would come out of extending our approach and looking out to distances $n >18$. When $p=.2870$ we exactly compute $P(\gamma_1 \leq 18) = .9018$. So, we are missing about $1/10$ of the right tail, which ought to contain a significant number of surviving inert particles. However, without a clever insight, extending  much further appears computationally intractable.

\section*{Acknowledgements}

We thank Rick Durrett and Laurent Tournier for invaluable comments and discussion.

\bibliographystyle{amsalpha}
\bibliography{ballistic}

\end{document}